\theoremstyle{plain}
\newtheorem{thm}{Theorem}[section]
\newtheorem{claim}[thm]{Claim}
\newtheorem{proposition}[thm]{Proposition}
\newtheorem{theorem}[thm]{Theorem}
\theoremstyle{definition}
\newtheorem{example}[thm]{Example}
\newtheorem{remark}[thm]{Remark}
\newtheorem*{ack}{Acknowledgements}
\numberwithin{equation}{section}
\DeclareMathOperator{\Aut}{Aut}
\title{Moishezon manifolds with no nef and big classes}
\author{Jia Jia}
\address{
	National University of Singapore,
	Singapore 119076, Republic of Singapore}
\email{jia\_jia@u.nus.edu}
\author{Sheng Meng}
\address{
	Korea Institute for Advanced Study,
	Seoul 02455, Republic of Korea}
\email{ms@u.nus.edu, shengmeng@kias.re.kr}
\subjclass[2020]{
	% 14M25, %Toric varieties, Newton polyhedra
	% 14E30, %Minimal model program (Mori theory, extremal rays)
	% 32H50, %iteration problem,
	%14J50, %Automorphisms of surfaces and higher-dimensional varieties
	32J27, %Compact Kahler manifolds: generalizations, classification
	32M05. %Complex Lie groups, automorphism groups acting on complex spaces
	% 20K30, %Automorphisms, homomorphisms, endomorphisms, etc.
	% 14H30, % Coverings, fundamental group
	% 37P55, %arithmetic dynamics on general algebraic varieties
	% 32H50, %iteration problem,
	% 11G10, %Abelian varieties of dimension > 1
	% 20K30, %(20Kxx: Abelian groups:) Automorphisms, homomorphisms, endomorphisms, etc.
	% 08A35, %Automorphisms, endomorphisms
	% 14J50, %Automorphisms of surfaces and higher-dimensional varieties
	% 32M05, %Complex Lie groups, automorphism groups acting on complex spaces
	% 11G10, %Abelian varieties of dimension >1
	% 37B40, %Topological entropy
}
\keywords{Moishezon space, Fujiki's class $\mathcal{C}$, nef and big}
\begin{document}

\begin{abstract}
	We show that a compact complex manifold $X$ has no non-trivial nef $(1,1)$-classes
	if there is a non-isomorphic bimeromorphic map
	$f\colon X\dashrightarrow Y$ isomorphic in codimension $1$
	to a compact K\"ahler manifold $Y$ with $h^{1,1}=1$.
	In particular, there exist infinitely many isomorphic classes of smooth compact Moishezon threefolds
	with no nef and big $(1,1)$-classes.
	This contradicts a recent paper
	(Strongly Jordan property and free actions of non-abelian free groups,
	Proc. Edinb. Math. Soc., (2022): 1--11).
\end{abstract}

\maketitle
\tableofcontents

\section{Introduction}

Let \(X\) be a compact complex manifold
with a fixed positive Hermitian form \(\omega\).
Let \(\alpha\) be a closed \((1,1)\)-form.
We use \([\alpha]\) to represent its class in the Bott-Chern \(H^{1,1}_{BC}(X)\).
Recall the following positivity notions
(independent of the choice of \(\omega\)).

\begin{itemize}[leftmargin=2em]
	\item \([\alpha]\) is \emph{K\"ahler} if it contains a K\"ahler form,
	      i.e., if there is a smooth function \(\varphi\) such that
	      \(\alpha+\sqrt{-1}\partial \overline{\partial}\varphi \geq \epsilon \omega\)
	      on \(X\) for some \(\epsilon > 0\).
	\item \([\alpha]\) is \emph{nef} if for every \(\epsilon>0\)
	      there is a smooth function \(\varphi_{\epsilon}\) such that
	      \(\alpha+\sqrt{-1}\partial \overline{\partial}\varphi_{\epsilon} \geq -\epsilon\omega\)
	      on \(X\).
	\item \([\alpha]\) is \emph{big} if it contains a K\"ahler current,
	      i.e., if there exists a quasi-plurisubharmonic
	      function (quasi-psh) \(\varphi\colon X\longrightarrow \mathbb{R} \cup \{-\infty\}\) such that
	      \(\alpha+\sqrt{-1}\partial \overline{\partial}\varphi \geq \epsilon\omega\)
	      holds weakly as currents on \(X\), for some \(\epsilon > 0\).
\end{itemize}

We say $X$ is in \emph{Fujiki's class $\mathcal{C}$} (resp.~\emph{Moishezon})
if it is the meromorphic image of a compact K\"ahler manifold (resp.~projective variety),
or equivalently it is bimeromorphic to a compact K\"ahler manifold (resp.~projective variety).
It is also equivalent to $X$ admitting a big $(1,1)$-class (resp.~big Cartier divisor).
We refer to \cite{Fuj78}*{Definition~1.1 and Lemma~1.1},
\cite{Var89}*{Chapter~IV, Theorem~5} and \cite{DP04}*{Theorem~0.7}
for equivalent definitions and some properties of Fujiki's class $\mathcal{C}$.

Throughout this article,
we work in the Fujiki's class $\mathcal{C}$
where $\partial \overline{\partial}$-lemma holds.
So we are freely to use the equivalent Bott-Chern and de Rham cohomologies.

We start with the following main observation.

\begin{theorem}\label{thm1}
	Let $f\colon X\dashrightarrow Y$ be a bimeromorphic map of compact complex manifolds
	which is isomorphic in codimension \(1\).
	Suppose $X$ is K\"ahler with $h^{1,1}(X,\mathbb{R})=1$ and $f$ is non-isomorphic.
	Then any nef $(1,1)$-class on $Y$ is trivial.
	In particular,
	$Y$ is a non-K\"ahler manifold in Fujiki's class $\mathcal{C}$
	with no nef and big $(1,1)$-classes.
\end{theorem}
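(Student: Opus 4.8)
The plan is to deduce the statement from the one-dimensionality of \(H^{1,1}(Y,\mathbb{R})\) together with the negativity of the exceptional locus. First I resolve the indeterminacy of \(f\) by finitely many blow-ups with smooth centres, obtaining a compact K\"ahler manifold \(W\) and bimeromorphic morphisms \(p\colon W\to X\) and \(q\colon W\to Y\) with \(q=f\circ p\). Two elementary remarks will be used repeatedly: a nef class pairs non-negatively with every curve (integrate \(\alpha+\sqrt{-1}\partial\overline{\partial}\varphi_\epsilon\ge-\epsilon\omega\) over the curve, kill the \(\partial\overline{\partial}\)-term by Stokes, and let \(\epsilon\to0\)); and the pull-back of a nef class under a morphism is nef (a morphism from a compact manifold satisfies \(q^{*}\omega_Y\le C\,\omega_W\)). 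Since \(f\) is isomorphic in codimension \(1\), a prime divisor is \(p\)-exceptional if and only if it is \(q\)-exceptional — otherwise \(f\) or \(f^{-1}\) would contract a divisor — so \(p\) and \(q\) share the same exceptional prime divisors \(E_1,\dots,E_k\). Using the blow-up description \(H^{1,1}(W)=p^{*}H^{1,1}(X)\oplus\bigoplus_i\mathbb{R}[E_i]=q^{*}H^{1,1}(Y)\oplus\bigoplus_i\mathbb{R}[E_i]\), the common summand \(\bigoplus_i\mathbb{R}[E_i]\) forces \(h^{1,1}(Y,\mathbb{R})=h^{1,1}(X,\mathbb{R})=1\). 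Let \([\eta]:=f_{*}[\omega_X]\) generate \(H^{1,1}(Y,\mathbb{R})\).

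Since every \((1,1)\)-class on \(Y\) is a real multiple of \([\eta]\) and the nef cone is convex, it suffices to show that neither \([\eta]\) nor \(-[\eta]\) is nef. The class \(-[\eta]\) is easy: choosing a general curve \(C'\subset X\) disjoint from the (codimension \(\ge2\)) indeterminacy locus of \(f\), its image \(C''=f_{*}C'\) lies where \(f\) is an isomorphism, so \([\eta]\cdot C''=[\omega_X]\cdot C'>0\) and \((-[\eta])\cdot C''<0\); by the first remark \(-[\eta]\) is not nef.

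The heart of the matter is \([\eta]\). I set \(D:=q^{*}[\eta]-p^{*}[\omega_X]\). As \(q_{*}q^{*}=\mathrm{id}\) and \(q_{*}p^{*}[\omega_X]=[\eta]\), we get \(q_{*}D=0\); since \(\ker q_{*}\) is spanned by the \(q\)-exceptional divisors, \(D=\sum_i a_i[E_i]\) is supported on the common exceptional divisors and is therefore both \(p\)- and \(q\)-exceptional. Because \(p^{*}[\omega_X]\) is nef, every \(q\)-contracted curve \(\gamma\) satisfies \(D\cdot\gamma=q^{*}[\eta]\cdot\gamma-p^{*}[\omega_X]\cdot\gamma=0-(\text{non-negative})\le0\), so the negativity lemma gives \(D\ge0\). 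Now suppose \([\eta]\) were nef. Then \(q^{*}[\eta]\) is nef by the second remark, so every \(p\)-contracted curve \(\gamma'\) satisfies \(D\cdot\gamma'=q^{*}[\eta]\cdot\gamma'-0\ge0\), and applying the negativity lemma to \(-D\) relative to \(p\) yields \(D\le0\). Hence \(D=0\), i.e.\ \(q^{*}[\eta]=p^{*}[\omega_X]\). From this equality every \(q\)-contracted curve is \(p\)-contracted (as \([\omega_X]\) is K\"ahler), so \(p\) is constant on the fibres of \(q\), and by rigidity \(p\) factors as \(g\circ q\) for a bimeromorphic morphism \(g\colon Y\to X\); then \(f=g^{-1}\). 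Since \(X\) is smooth, the exceptional locus of \(g\) is either empty or a divisor, but \(g\) contracts no divisor (as \(f\) is isomorphic in codimension \(1\)), so \(g\), and hence \(f\), is an isomorphism — contradicting the hypothesis. Therefore \([\eta]\) is not nef.

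Combining the two directions, no nonzero multiple of \([\eta]\) is nef, so every nef \((1,1)\)-class on \(Y\) is trivial. In particular \(Y\), being bimeromorphic to the K\"ahler manifold \(X\), lies in Fujiki's class \(\mathcal{C}\); it admits no K\"ahler class (such a class is nef and nonzero) and hence is non-K\"ahler, and it admits no nef and big class (a big class is nonzero, hence a nonzero multiple of \([\eta]\), which is never nef). The main obstacle is the hard direction: one must invoke the negativity lemma in the analytic, non-projective setting — where it follows from the negative-definiteness of the intersection pairing of the exceptional divisors over the base, e.g.\ by cutting \(W\) down with general hypersurfaces to reduce to the Hodge index theorem on a surface — and one must justify that the purely cohomological conclusion \(D=0\) indeed forces \(f\) to be biholomorphic, via the rigidity lemma and the divisoriality of exceptional loci over a smooth base.
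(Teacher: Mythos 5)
Your overall architecture is sound and close to the paper's: resolve the indeterminacy, observe that $D=q^{*}f_{*}[\omega_X]-p^{*}[\omega_X]$ is supported on the common exceptional divisors, pin down its sign from both sides, conclude $D=0$, and then use rigidity plus purity of the exceptional locus to contradict non-isomorphy. Your symmetric double use of the negativity lemma is in fact a clean packaging of what the paper does (the paper gets $a_i\ge 0$ from Fujiki's lemma applied to the semi-positive form $p^{*}\alpha$, and the other inequality from the negativity lemma after perturbing by a $p$-ample exceptional divisor and passing to rational, Cartier coefficients so as to invoke the Cartier-divisor version of the lemma --- a reduction you would also need unless you have an $\mathbb{R}$-divisor statement in the analytic category at hand).

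There is, however, one genuine gap. In the final step you argue: every $q$-contracted curve is $p$-contracted, \emph{so} $p$ is constant on the fibres of $q$, and rigidity applies. This inference requires the positive-dimensional fibres of $q$ to be connected by curves. In your setup only $p$ is a composite of blow-ups (hence projective); $q\colon W\to Y$ is merely a proper bimeromorphic morphism of complex manifolds, and a positive-dimensional compact analytic fibre of such a morphism need not contain any curve at all. The paper handles exactly this point by a second application of Chow's lemma: after the identity $q^{*}q_{*}p^{*}[\alpha]=p^{*}[\alpha]$ is established, one replaces $W$ by a further modification $W'\to W$ on which $q$ becomes projective (noting that $q_{*}p^{*}[\alpha]$ is unchanged), so that the fibres of $q$ are projective and covered by curves. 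You should insert this step. Two smaller points: the decomposition $H^{1,1}(W)=q^{*}H^{1,1}(Y)\oplus\bigoplus_i\mathbb{R}[E_i]$ is not automatic for the non-blow-up morphism $q$; what is immediate is that the right-hand side injects into $H^{1,1}(W)$, which already yields $h^{1,1}(Y)\le h^{1,1}(X)=1$ and hence equality (the paper simply cites Rao--Yang--Yang), and the claim that $\ker q_*$ is spanned by the $[E_i]$ then follows by dimension count rather than by fiat.
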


One way to construct $f\colon X\dashrightarrow Y$ in \cref{thm1} is
by considering an elementary transformation or a (non-projective) flop.

\begin{example}\label{example1}
	Let $X\subset \mathbb{P}^4$ be a generic smooth quintic threefold.
	By a classical result of Clemens and Katz (cf.~\cites{Cle83,Kat86}),
	$X$ contains a smooth rational curve $C_d$ of degree $d$
	with normal bundle $\mathcal{N}_{C_d/X}\cong \mathcal{O}_{C_d}(-1)^{\oplus 2}$.
	This result was later generalised to a complete intersection of degree $(2,4)$
	in $\mathbb{P}^5$ by Oguiso (cf.~\cite{Ogu94}*{Theorem~2}).
	Let $p\colon Z_d\to X$ be the blowup along $C_d$.
	Then the exceptional divisor $E\cong C_d\times C_d'\cong \mathbb{P}^1\times \mathbb{P}^1$.
	By the contraction theorem of Nakano-Fujiki (cf.~\cite{NF71}),
	there is a bimeromorphic morphism $q\colon Z_d\to Y_d$ to a smooth compact complex manifold $Y_d$
	which contracts $E$ to $C_d'$ along $C_d$.
	Then we can construct $f\coloneqq q\circ p^{-1}\colon X\dashrightarrow Y_d$
	which is isomorphic in codimension \(1\).
	By the Lefschetz hyperplane theorem,
	we see that $h^{2}(X,\mathbb{R})=1$ and hence $h^{1,1}(X,\mathbb{R})=1$.
	Applying \cref{thm1},
	we obtain infinitely many isomorphic classes of smooth Calabi-Yau Moishezon threefolds
	$\{Y_d\}_{d>0}$ satisfying the following theorem.
\end{example}

\begin{theorem}\label{thm2}
	There exist infinitely many isomorphic classes of smooth compact Moishezon threefolds
	with no nef and big $(1,1)$-classes.
\end{theorem}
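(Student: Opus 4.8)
The plan is to take the family $\{Y_d\}_{d>0}$ produced in \cref{example1} and verify three properties: each $Y_d$ is a smooth compact Moishezon threefold, each has no nef and big $(1,1)$-class, and infinitely many of them are pairwise non-isomorphic. The first property is immediate, since $Y_d$ is a smooth compact complex manifold (the Nakano--Fujiki contraction $q$ has smooth target) which is bimeromorphic to the projective quintic $X$, and being Moishezon is preserved under bimeromorphic maps. The second property will follow from \cref{thm1} applied to $f=q\circ p^{-1}\colon X\dashrightarrow Y_d$, which is isomorphic in codimension $1$, once we know that $X$ is K\"ahler with $h^{1,1}(X,\mathbb{R})=1$ (true for the smooth projective quintic, by Lefschetz as noted in \cref{example1}) and that $f$ is non-isomorphic. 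Thus everything reduces to a single numerical computation that will simultaneously establish the non-isomorphy of $f$ and separate infinitely many of the $Y_d$.

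The invariant I would use is the triple intersection (cubic) form on $H^{1,1}$, equivalently the triple cup product $H^2(Y_d,\mathbb{Z})^{\otimes 3}\to H^6(Y_d,\mathbb{Z})\cong\mathbb{Z}$, which is a homeomorphism (hence biholomorphism) invariant of the oriented $6$-manifold, independently of whether $Y_d$ is K\"ahler. Because $f$ is an isomorphism in codimension $1$, the strict transform induces an isomorphism $H^2(X,\mathbb{Z})\xrightarrow{\ \sim\ }H^2(Y_d,\mathbb{Z})\cong\mathbb{Z}$, so the class $H_d\coloneqq f_*H$ of the hyperplane $H$ generates $H^2(Y_d,\mathbb{Z})$. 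The cubic form is then $t\mapsto H_d^3\,t^3$, and its isomorphism type as a form on $\mathbb{Z}$ is recorded by the single integer $|H_d^3|$.

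The computation of $H_d^3$ is the heart of the argument and the step I expect to be the main obstacle. I would carry it out on the common resolution $Z_d$, writing $p^*H=q^*H_d+\lambda E$ in $H^2(Z_d)$ and exploiting the geometry of $E\cong\mathbb{P}^1\times\mathbb{P}^1$: denoting by $f_1,f_2$ the two rulings (contracted by $p$ and by $q$ respectively), one checks $E\cdot f_i=-1$, $E|_E=-f_1-f_2$ and $E^3=2$, while $\lambda$ is pinned down by $H\cdot C_d=d$. Expanding $H^3=(p^*H)^3=(q^*H_d+\lambda E)^3$ and applying the projection formula then yields the standard $(-1,-1)$-flop formula
\[
	H_d^3 \;=\; H^3-(H\cdot C_d)^3 \;=\; 5-d^3 .
\]
In particular $H_d^3=5-d^3\neq 5=H^3$, so $Y_d\not\cong X$; hence the specific map $f$ is non-isomorphic, which is exactly what was needed to apply \cref{thm1} above.

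Finally, since $|H_d^3|=|5-d^3|=d^3-5$ is strictly increasing for $d\geq 2$, the integers $|H_d^3|$ take infinitely many distinct values as $d$ ranges over the positive integers. As the cubic form is a biholomorphism invariant, $Y_d$ and $Y_{d'}$ lie in distinct isomorphism classes whenever $|5-d^3|\neq|5-d'^3|$, so $\{Y_d\}_{d>0}$ contains infinitely many isomorphism classes; together with the first two properties this proves the theorem. The only points requiring care beyond the flop computation are the identification of $H_d$ as a generator of $H^2(Y_d,\mathbb{Z})$ and the invariance of the triple cup product, both of which are standard.
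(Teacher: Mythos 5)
Your proposal follows the paper's own route exactly: the paper proves this theorem via the construction of \cref{example1} (blow up a degree-$d$ rational curve $C_d$ with normal bundle $\mathcal{O}(-1)^{\oplus 2}$ on a generic quintic and contract the other ruling of $E\cong\mathbb{P}^1\times\mathbb{P}^1$), combined with \cref{thm1}. The flop computation $H_d^3=H^3-(H\cdot C_d)^3=5-d^3$ that you carry out is correct and is precisely the standard way to supply the two points the paper leaves implicit, namely that $f$ is non-isomorphic and that infinitely many of the $Y_d$ are pairwise non-isomorphic.
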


Nakamura (cf.~\cite{Nak87}*{(3.3)~Remark}) provides another example for the above theorem.

\begin{example}\label{example2}
	There is a bimeromorphic map $f\colon \mathbb{P}^3\dashrightarrow X$
	to a smooth Moishezon threefold $X$ of $h^{1,1}(X,\mathbb{R})=1$
	with no nef and big $(1,1)$-class.
	The map $f$ is constructed by first blowing up a non-singular curve of bidegree $(3,k)$
	with $k\geq 7$ in a smooth quadric surface $S\cong \mathbb{P}^1\times \mathbb{P}^1$
	and then contracting the proper transform of $S$.
	Then $H^{1,1}(X,\mathbb{R})$ is generated by a big divisor $L$ with $L^3<0$.
	So $X$ admits no nef and big class.
	Note that, in this case, $f$ is not isomorphic in codimension $1$.
\end{example}

The aim of the present note is to confute a key theorem
in the recent paper \cite{Kim22} as explained in the following remark.

\begin{remark}
	In \cite{Kim22}*{Theorem~4.2(1)},
	the author asserts that a compact complex manifold $X$ in Fujiki's class $\mathcal{C}$
	always admits a nef and big class.
	However, as we just discussed,
	\cref{example1,example2} or \cref{thm2} confute this claim.
	Note that \cite{Kim22}*{Theorem~4.2(1)} plays a crucial role
	in the proof of \cite{Kim22}*{Corollary~4.3} that $\Aut_{\tau}(X)/\Aut_0(X)$ is finite
	where $\Aut_{\tau}(X)$ is the group of automorphisms (pullback)
	acting trivially on $H^2(X,\mathbb{R})$
	and $\Aut_0(X)$ is the neutral component.
	So the proof there does not work.
	Nevertheless, the statement \cite{Kim22}*{Corollary~4.3} still holds
	and was previously proved by showing the existence of equivariant K\"ahler model;
	see \cite{JM22}*{Theorem~1.1, Corollary~1.3}.
\end{remark}

It is known that a smooth compact surface in Fujiki's class \(\mathcal{C}\) is K\"ahler
and hence a smooth Moishezon surface is projective.
So \cref{thm2} is optimal in terms of minimal dimension
and it is easy to construct examples,
like those in \cref{thm2},
of arbitrary higher dimensions by further taking the product
with a smooth projective variety of suitable dimension.
In the singular surface case, we summarize several examples constructed
by Schr\"oer (cf.~\cite{Sch99}) and Mondal (cf.~\cite{Mon16}) in the following remark.

\begin{remark}
	The examples in \cite{Sch99} are constructed in a similar way
	by different elementary transformations of $\mathbb{P}^1\times C$ where the genus $g(C)>0$.
	However, they behave quite differently on Cartier divisors.
	The example in \cite{Mon16}*{\S~2} is a supplement to (1) on the rational case.
	It seems that we do not know any rational example for point (3).
	\begin{enumerate}[leftmargin=2em]
		\item (cf.~\cite{Sch99}*{\S~3})
		      There is a non-projective normal compact Moishezon surface $S$
		      such that the Picard number of $S$ is $0$.
		      In particular, $S$ admits no non-trivial nef Cartier divisor.
		\item (cf.~\cite{Mon16}*{\S~2})
		      There is a non-projective normal compact Moishezon \textbf{rational} surface $S$
		      such that the Picard number of $S$ is $0$.
		      The surface is $Y_2'$ in \cite{Mon16}*{\S~2}.
		      We give some explanation on the Picard number.
		      Note that the Weil-Picard number of $S$ is $1$
		      (cf.~\cite{Nak17}*{Definition~2.7 and Lemma~2.10}).
		      Since $S$ is not projective
		      (cf.~\cite{Mon16}*{Theorem~4.1 and Example~3.19}),
		      the Picard number of $S$ has to be $0$
		      (cf.~\cite{Nak17}*{Definition~2.11--Remark~2.13, Remark on the top of page 303}).
		\item (cf.~\cite{Sch99}*{\S~4})
		      There is a non-projective normal compact Moishezon surface $Z$
		      which allows a non-projective birational morphism $Z\to S$ to a projective surface $S$.
		      In particular, $Z$ admits a nef and big Cartier divisor
		      which is the pullback of an ample Cartier divisor on \(S\).
	\end{enumerate}
\end{remark}

\section{Proof of Theorem~\ref{thm1}}

We first reprove \cite{Gol21}*{Theorem~4.5} by the following \cref{prop1}.
The first version of this proposition was formulated in \cite{Fuj81}*{Corollary~3.3}
where Fujiki works in the smooth setting and $f_*[\alpha]$ is assumed to be semi-positive.
Later, it was generalized by Huybrechts (cf.~\cite{Huy03}*{Proposition~2.1})
to the situation when canonical bundles $K_X$ and $K_Y$ are nef
and $[\alpha]$ and $f_*[\alpha]$ are only assumed
to have positive intersections with all rational curves.

When dealing with the singular setting in the below \cref{prop1},
we refer to \cite{HP16} for the basic definitions involved.
Note that for a normal compact complex space \(X\) with rational singularities,
\(H^{1,1}(X,\mathbb{R})\) embeds into \(H^2(X,\mathbb{R})\) naturally,
and the intersection product on \(H^{1,1}(X,\mathbb{R})\)
can be defined via the cup-product for \(H^2(X,\mathbb{R})\)
(cf.~\cite{HP16}*{Remark~3.7}).
Of course, for the purpose of this note, one can focus on the smooth setting for simplicity.

\begin{proposition}[cf.~\cite{Gol21}*{Theorem~4.5} and \cref{remark1}]\label{prop1}
	Let $f\colon X\dashrightarrow Y$ be a bimeromorphic map of normal compact complex spaces
	with rational singularities.
	Suppose $f$ does not contract divisors
	and there exists a K\"ahler class $[\alpha]\in H^{1,1}(X,\mathbb{R})$ such that $f_*[\alpha]$ is nef.
	Then $f^{-1}$ is holomorphic.
\end{proposition}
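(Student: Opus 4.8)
The plan is to resolve $f$ by a common smooth model and to pin down the defect class $\theta := p^*[\alpha] - q^*f_*[\alpha]$ by two complementary positivity estimates. Concretely, I would choose a smooth compact complex manifold $W$ together with bimeromorphic morphisms $p\colon W\to X$ and $q\colon W\to Y$ such that $q = f\circ p$ as meromorphic maps. Since $f_*[\alpha] = q_*p^*[\alpha]$ by the very definition of the pushforward along $f=q\circ p^{-1}$, and $q_*q^* = \mathrm{id}$ on $H^{1,1}(Y,\mathbb{R})$ by the projection formula, one gets $q_*\theta = 0$. Using that the kernel of $q_*$ on $(1,1)$-classes is spanned by the classes of $q$-exceptional prime divisors (which holds in the present singular setting with the intersection-theoretic conventions of \cite{HP16}), I may write $\theta = \sum_j b_j[F_j]$ for $q$-exceptional prime divisors $F_j$ on $W$, and hence regard $\theta$ as the class of an $\mathbb{R}$-divisor $\sum_j b_j F_j$ supported on the $q$-exceptional locus.

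The first observation is that the hypothesis that $f$ does not contract divisors forces every $q$-exceptional prime divisor $F_j$ to be $p$-exceptional as well. Indeed, if some $F_j$ were not $p$-exceptional, it would dominate a prime divisor $D=p(F_j)$ on $X$; then $F_j$ is the strict transform of $D$, and $q(F_j)$ has codimension $\ge 2$ in $Y$, so $f$ would contract $D$, contradicting the hypothesis. Thus $\sum_j b_j F_j$ is simultaneously $q$-exceptional and $p$-exceptional, which is exactly what is needed to apply the negativity lemma relative to both $p$ and $q$.

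Next I would intersect $\theta$ with contracted curves. For a curve $C$ contracted by $q$ one has $q^*f_*[\alpha]\cdot C = f_*[\alpha]\cdot q_*C = 0$ and $p^*[\alpha]\cdot C = [\alpha]\cdot p_*C\ge 0$ because $[\alpha]$ is K\"ahler; hence $\theta$ is $q$-nef, and the negativity lemma gives $\theta\le 0$. For a curve $C'$ contracted by $p$ one has $p^*[\alpha]\cdot C' = 0$ while $q^*f_*[\alpha]\cdot C' = f_*[\alpha]\cdot q_*C'\ge 0$ because $f_*[\alpha]$ is nef by assumption; hence $-\theta$ is $p$-nef, and the negativity lemma gives $\theta\ge 0$. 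The two estimates combine to $\theta = 0$, that is, $p^*[\alpha] = q^*f_*[\alpha]$.

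Finally I would feed this equality back into the curve computation: for any curve $C$ contracted by $q$ one now obtains $[\alpha]\cdot p_*C = p^*[\alpha]\cdot C = q^*f_*[\alpha]\cdot C = 0$, so $p_*C = 0$ since $[\alpha]$ is K\"ahler, meaning $p$ contracts $C$ as well. Thus $p$ is constant on the fibers of $q$, and by the rigidity lemma $p$ factors as $p = g\circ q$ for a holomorphic map $g\colon Y\to X$; by construction $g = f^{-1}$, so $f^{-1}$ is holomorphic. The main obstacle I anticipate is not the positivity bookkeeping, which is elementary once the framework is in place, but rather the two structural inputs in the singular analytic category: that $\ker q_*$ on $H^{1,1}(W,\mathbb{R})$ is generated by exceptional divisor classes, and that the negativity lemma is available for bimeromorphic morphisms of normal compact complex spaces with rational singularities. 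Both should be secured through the intersection theory developed in \cite{HP16}, and for the smooth case they are classical.
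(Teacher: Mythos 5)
Your proof is correct in outline and reaches the paper's conclusion, but the central step is handled by a genuinely different mechanism, and two technical points that the paper treats with care are glossed over. The paper also reduces to a diagram $X\xleftarrow{p} Z\xrightarrow{q} Y$ and proves $q^*q_*p^*[\alpha]=p^*[\alpha]$, but it does so by quoting \cite{Gol21}*{Lemma~4.4} (Fujiki's Lemma~2.4): because $p^*[\alpha]$ is represented by a smooth semi-positive form, the defect class is automatically $\sum_i a_i[E_i]$ with $a_i\ge 0$, so a \emph{single} application of the negativity lemma, relative to $p$, suffices. You instead extract the decomposition $\theta=\sum_j b_j[F_j]$ from the structure of $\ker q_*$ with no sign information and recover both inequalities by applying the negativity lemma twice, relative to $q$ and to $p$; this is more symmetric and bypasses Fujiki's lemma, but the assertion that $\ker q_*$ on $(1,1)$-classes is spanned by $q$-exceptional prime divisors is exactly the support half of that lemma and is not free in the singular analytic category, so you have not actually saved an input. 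The points to shore up are these. First, a fibre of a general proper modification need not contain any compact curve, so both your relative-nefness hypotheses and your final step that ``$p$ is constant on the fibres of $q$'' are too weak as stated; the paper secures enough curves by two applications of Chow's lemma, first arranging $p$ to be a projective morphism given by smooth blowups (which also furnishes the $p$-ample divisor $-E$ supported on the exceptional locus that drives its negativity argument) and later arranging $q$ to be projective for the curve computation. Second, the negativity lemma cited in the paper (\cite{Wan21}*{Lemma~1.3}) is for Cartier divisors; the paper can perturb its real coefficients to rational ones because it works with the open condition of relative \emph{ampleness}, whereas your argument uses only relative nefness of the real class $\theta$, which is not stable under perturbation, so you need an $\mathbb{R}$-divisor version of the negativity lemma for proper modifications of normal complex spaces. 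Both issues are repairable, but they are precisely where the paper spends its effort.
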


\begin{proof}
	Consider the log resolution of the indeterminacy of $f$:
	\[
		\xymatrix{
			{} & Z \ar[ld]_{p} \ar[rd]^{q} \\
			X \ar@{.>}[rr]^{f} & {} & Y
		}
	\]
	where $p\colon Z\to X$ and $q\colon Z\to Y$ are the two projections.
	By Chow's lemma (cf.~\cite{Hir75}*{Corollary~2 and Definition~4.1}),
	we may assume $p$ is a projective morphism
	obtained by a finite sequence of blowups along smooth centres.
	Denote by $\bigcup_{i=1}^n E_i$ the full union of exceptional prime divisors of $p$.
	One can find
	\[
		E= \sum_{i=1}^n \delta_i E_i
	\]
	with suitable $\delta_1,\ldots,\delta_n>0$
	such that $-E$ is $p$-ample (cf.~\cite{DP04}*{Proof of Lemma~3.5}).
	Here, if $n=0$, then $p$ is isomorphic and $-E=0$ is automatically $p$-ample.

	Note that $p^*[\alpha]$ is represented by a smooth semi-positive form
	and $q$-exceptional divisors are also $p$-exceptional divisors
	since $f$ does not contract divisors by the assumption.
	Applying \cite{Gol21}*{Lemma~4.4} (cf.~\cite{Fuj81}*{Lemma~2.4}) to $p^*[\alpha]$,
	we have
	\[
		q^*q_*p^*[\alpha]-p^*[\alpha]=\sum_{i=1}^n a_i [E_i]
	\]
	with $a_i\geq 0$.

	\begin{claim}\label{claim-T}
		We claim that $q^*q_*p^*[\alpha]-p^*[\alpha]=0$.
	\end{claim}

	\begin{proof}
		Suppose the contrary that $a_1>0$ without loss of generality.
		Note that $q^*q_*p^*[\alpha]$ is nef and $p^*[\alpha]$ is $p$-trivial.
		Then the divisor
		\[
			D\coloneqq \sum_{i=1}^n a_i E_i-\epsilon E=\sum_{i=1}^n (a_i-\epsilon\delta_i) E_i
		\]
		is $p$-ample and $-D$ is not effective whenever $0<\epsilon<a_1/\delta_1$.
		We can further find rational coefficients $b_i$ sufficiently closed to $a_i-\epsilon\delta_i$
		such that
		\[
			D'\coloneqq \sum_{i=1}^n b_i E_i
		\]
		is still $p$-ample and $-D'$ is not effective.
		Note that $mD'$ is then a Cartier divisor for a suitable integer $m$ and $p_*(-mD')=0$.
		By the negativity lemma for Cartier divisors (cf.~\cite{Wan21}*{Lemma~1.3}),
		$-mD'$ is effective, a contradiction.
		So the claim is proved.
	\end{proof}

	Applying Chow's lemma again,
	there is a bimeromorphic morphism $\sigma\colon W\to Z$
	such that $q\circ\sigma$ is a projective morphism.
	Note that $(q\circ\sigma)_*(p\circ\sigma)^*[\alpha]=q_*p^*[\alpha]$.
	So we may replace $Z$ by $W$
	and assume $q$ is already projective (without requiring $p$ being projective).
	Let $F$ be any fibre of $q$ which is projective.
	Let $C$ be any curve in $F$.
	By the projection formula and \cref{claim-T},
	\[
		\int \alpha\wedge \langle p_*C\rangle
		=\int p^*\alpha\wedge \langle C \rangle
		=\int q^*q_*p^*\alpha\wedge \langle C \rangle
		=\int q_*p^*\alpha\wedge \langle q_*C \rangle=0
	\]
	where $\langle-\rangle$ represents the integration current.
	Since $[\alpha]$ is K\"ahler, $p(C)$ is a point and hence $p(F)$ is a point.
	By the rigidity lemma (cf.~\cite{Gol21}*{Lemma~4.1})
	which is essentially due to the Riemann extension theorem (cf.~\cite{GR84}*{Page~144}),
	$f^{-1}\colon Y\to X$ is a holomorphic map.
\end{proof}

\begin{remark}\label{remark1}
	\Cref{claim-T} was treated in the proof of \cite{Gol21}*{Theorem~4.5, Equation~(4.4)}.
	However, the proof there seems incomplete after Equation (4.2) where the author
	claims ``the singular locus of the nef class is empty''.
	This is also mentioned after \cite{Gol21}*{Definition~4.3}
	where the author seems to have misinterpreted a result of Boucksom.
	Note that a nef class has empty singular locus if and only if it is semi-positive.
	However, there are situations where non-semi-positive nef classes exist.
	Nevertheless, we can overcome this gap by applying the negativity lemma
	as in the proof of \cref{claim-T}.
\end{remark}

\begin{proof}[Proof of \cref{thm1}]
	Note that $h^{1,1}(Y,\mathbb{R})=h^{1,1}(X,\mathbb{R})=1$
	because $f$ is isomorphic in codimension \(1\) (cf.~\cite{RYY19}*{Corollary~1.5}).
	Let $[\alpha]$ be a K\"ahler class on $X$.
	Then $H^{1,1}(Y,\mathbb{R})$ is generated by the big class $f_*[\alpha]$.
	Let $[\gamma]\in H^{1,1}(Y,\mathbb{R})$ be a nef class.
	Then $[\gamma]=tf_*[\alpha]$ for some $t\geq 0$ (cf.~\cite{Fuj81}*{Lemma~2.1}).
	So it suffices to show that $f_*[\alpha]$ is not nef.

	Suppose the contrary that
	$f_*[\alpha]\in H^{1,1}(Y,\mathbb{R})$ is nef.
	By \cref{prop1}, $f^{-1}$ is holomorphic.
	By the purity (cf.~\cite{GR55}*{Satz~4})
	and since $f^{-1}$ is isomorphic in codimension \(1\),
	the exceptional locus of $f^{-1}$ is empty.
	In particular, $f$ is isomorphic, a contradiction.
\end{proof}

\begin{ack}
	The authors would like to thank Professor~Boucksom
	and Professor~Oguiso for the valuable discussion and suggestions.
	The first author is supported by a President's Scholarship of NUS\@.
	The second author is supported by a Research Fellowship of KIAS (MG075501).
\end{ack}

% References

\end{document}